\newtheorem{nntheorem}{\bf\fontfamily{phv}\selectfont Théorème}
\newtheorem{nnassumption}{\bf\fontfamily{phv}\selectfont Hypothèse}
\newtheorem{nndefinition}{\bf\fontfamily{phv}\selectfont Definition}
\newtheorem{nnlemma}[nndefinition]{\bf\fontfamily{phv}\selectfont Lemma}
\newtheorem{nncorollary}[nndefinition]{\bf\fontfamily{phv}\selectfont Corollary}
\newtheorem{nnproposition}[nndefinition]{\bf\fontfamily{phv}\selectfont Proposition}
\newtheorem{nexample}[nndefinition]{\bf\fontfamily{phv}\selectfont Exemple}
\newenvironment{theorem}
{\begin{nntheorem}\it}
{\end{nntheorem}}
\newenvironment{proposition}
{\begin{nnproposition}\it}
{\end{nnproposition}}
\newenvironment{definition}
{\begin{nndefinition}\it}
{\end{nndefinition}}
\newenvironment{assumption}
{\begin{nnassumption}\it}
{\end{nnassumption}}
\newenvironment{nnexample}
{\begin{nexample}\rm}{\end{nexample}}
\newtheorem{nnremark}{\bf\fontfamily{phv}\selectfont Remarque}
\newenvironment{remark}{\begin{nnremark}}{\hfill \hspace*{1pt}\hfill $\circ$\end{nnremark}}
\newenvironment{notation}{{\bf \fontfamily{phv}\selectfont Notations.}}{\hfill \hspace*{1pt}\hfill}
\newenvironment{proof}{{\it \fontfamily{phv}\selectfont Démonstration.\ }}{\hfill \hspace*{1pt}\hfill $\bullet$}
\newcommand{\dx}{\dot{x}}
\newcommand{\class}{\mathcal{C}}
\newcommand{\R}{\mathbb{R}}
\newcommand{\Ras}{\mathbb{R}_{\geq 0}}
\newcommand{\Kinf}{\mathcal{K}^{\infty}}
\newcommand{\A}{\mathbf{A}}
\title{Union d'une commande par \emph{backstepping} avec une commande locale}
\date{}
\author[1]{Humberto Stein Shiromoto \thanks{E-mail: \texttt{humberto.shiromoto@ieee.org}.}}
\author[2]{Vincent Andrieu}
\author[1]{Christophe Prieur}
\affil[1]{\small Gipsa-lab, 11 rue des Mathématiques, Grenoble Campus, BP 46, F - 38402 SAINT MARTIN D'HERES, Cedex,  France}
\affil[2]{\small LAGEP-CNRS, Université Claude Bernard Lyon 1, bât 308G ESCPE-Lyon, 2ème étage, 43 bd du 11 Novembre 1918, 69622 Villeurbanne Cedex FRANCE}
\begin{document}

\maketitle

\begin{abstract}
.\ On considère une classe de systèmes non-linéaires pour lesquels la technique de synthèse par \emph{backstepping} n'est pas applicable. On présente un critère pour la synthèse d'une commande hybride faisant l'union d'une commande par backstepping et d'une commande locale. Cette commande hybride rend le système bouclé globalement asymptotiquement stable. Le critère de sélection repose sur le choix de la taille et la stabilisation d'un ensemble inclus dans le bassin d'attraction d'une commande locale. Les résultats sont illustrés par des simulations.
\end{abstract}

\section{Introduction}

Il existe de nombreuses méthodes différentes pour la synthèse de commandes pour les systèmes non-linéaires (voir par exemple, \citep{FreemanKokotovic2008,Khalil1992,Kokotovic1992}).
En fonction de la structure de chaque système, la synthèse suit une approche différente, par exemple, avec un grand gain  \citep{GrognardSepulchreBastin1999}, par \emph{forwarding} \citep{JankovicSepulchre96,MazencPraly96,SepulchreJankovic97} et par \emph{backstepping} \citep{FreemanKokotovic2008,KrsticKokotovicKanellakopoulos95,PralyAndreaCoron91}. En raison de la présence des paramètres inconnus  ou de la structure dynamique, certaines techniques peuvent être inapplicables.

Pour le type de systèmes où la technique de synthèse par \emph{backstepping} peut ne pas être utilisable, l'approche présentée dans  \citep{5991251} donne une direction pour obtenir une commande stabilisante globale. Plus précisément, la technique se propose de rassembler deux types de commandes. L'union de commandes ainsi que de fonctions de Lyapunov a déjà étudiée dans la littérature, voir \citep{Andrieuetal2011}, \citep{PrieurAndrieu2010} et \citep{Prieur2001}. Dans cette dernière référence cette union a été réalisée  en utilisant des commandes hybrides (\citep{HespanhaLiberzonMorse99}, \citep{MorinSamson00}).
Un des aspects essentiels est  que cette classe de commandes hybrides est robuste par rapport aux erreurs de mesure et aux erreurs d'actionneur.

Il est important de noter que dans \citep{5991251}, une classe de systèmes non-linéaires a été présentée pour illustrer l'utilisation de cette technique. En revanche, cette approche laisse en suspens des questions ouvertes. Ainsi, il n'a été défini aucune méthode pour la détermination des gains des commandes obtenues. Dans cet article en reprenant l'exemple particulier étudié dans \citep{5991251}, nous montrons comment un choix approprié des paramètres nous permet de concevoir une loi de commande hybride qui stabilise globalement et asymptotiquement l'origine.
Par ailleurs, on présente aussi des limitations sur cette technique, plus précisément on analyse un cas tel que  l'hypothèse d'inclusion (voir hypothèse \ref{hyp:inclusion}, introduite ultérieurement) n'est pas satisfaite.

Dans la section \ref{sec:formulation du probleme} les concepts sur lesquels nous travaillons et la formulation du problème en considération seront introduits. Dans la section \ref{sec:classe de problemes} nous présentons le résultat de ce travail pour la classe de systèmes non-linéaires considérée. Nous illustrons le résultat avec des simulations présentées dans la section \ref{sec:simulations}.

\begin{notation} Dans cet article nous utilisons les notations suivantes: $x\cdot y$ est le produit interne cartésien entre deux vecteurs $x$ et $y$, $||\cdot||$ dénote la norme induite. La boule fermée et unitaire est notée par $\overline{\mathbf{B}}$. La classe des fonctions $f:\Ras\to\Ras$ qui sont continues, zéro à zéro, strictement croissantes et non bornées est notée par $\Kinf$. En ce qui concerne les dérivées, on note par $\partial_x f(x)$  la dérivée d'une fonction $f$ par rapport à un vecteur $x$. La dérivée de Lie d'une fonction $V$ par rapport au vecteur $f$, c'est-à-dire $\partial_xV(x)\cdot f(x,u)$ calculée en $(x,u)$ sera notée par $L_fV(x,u)$. Soit $\dx=f(x,u(x))$ un système en boucle fermée, son bassin d'attraction sera noté par $\mathbf{B}_f(u)$.
\end{notation}

\section{Formulation du problème}\label{sec:formulation du probleme}
Considérons le système non-linéaire défini par
\begin{equation}\label{eq:systeme generale}
\left\{\begin{array}{rcl}
\dx_1&=&f_1(x_1,x_2)+h_1(x_1,x_2,u)\\
\dx_2&=&f_2(x_1,x_2)u+h_2(x_1,x_2,u)
\end{array}\right.
\end{equation}
avec $(x_1,x_2)\in\R^{n-1}\times\R$ et $u\in\R$ la commande. Les fonctions $f_1$, $f_2$, $h_1$ et $h_2$ sont localement Lipschitziennes. Par ailleurs, les fonctions décrivant la dynamique satisfont à l'origine $f_1(0,0)=h_1(0,0,0)=0$, $h_2(0,0,0)=0$ et $f_2(x_1,x_2)\neq0$, $\forall(x_1,x_2)\in\R^n$.

On utilise aussi une notation plus compacte pour le système \eqref{eq:systeme generale}: $\dx=f_h(x,u)$. Dans le cas $h_1\equiv0$ et $h_2\equiv0$, on note simplement $\dx=f(x,u)$.

\subsection{Des hypothèses}
\begin{assumption}[Stabilité locale]\label{hyp:stabilite locale}
Il existe une fonction $\class^1$ positive, définie et propre $V_\ell:\R^n\to\Ras$, une fonction continue $\varphi_\ell:\R^n\to\R$ et une constante positive $c_\ell$ telles que, $\forall x\in\Omega_{c_\ell}(V_\ell)$,
\begin{equation}\label{eq:hypothese:1}
L_{f_h}V_\ell(x,\varphi_\ell)<0,
\end{equation}
avec $\Omega_{c_\ell}(V_\ell)=\{x:V_\ell(x)<c_\ell,\,x\neq0\}$.
\end{assumption}

Pour vérifier cette hypothèse, il est possible de faire la synthèse d'une commande locale pour \eqref{eq:systeme generale} en exploitant une approximation du modèle. Par exemple, si l'approximation de premier ordre autour de l'origine est contrôlable, alors il est possible d'obtenir une commande linéaire locale et une fonction de Lyapunov quadratique telles que l'hypothèse \ref{hyp:stabilite locale} soit satisfaite.

À cause de la présence des fonctions $h_1$ et $h_2$ et leurs dépendances par rapport à la commande $u$, il n'est pas possible d'utiliser la technique de \emph{backstepping} dans le sens classique. Plus précisément, supposons que l'on ait une commande $\varphi_1$ stabilisante pour le sous-système $\dx_1=f(x_1,x_2)$ et une fonction de Lyapunov $V_1$ associée (autrement dit que l'item a) de l'hypothèse \ref{hyp:bornes} dessous est valide), alors, avec la procédure classique du \emph{backstepping} la fonction $V(x_1,x_2)=V_1(x_1)+(x_2-\varphi_1(x_1))^2/2$ est une fonction de Lyapunov candidate pour $\dx=f_h(x,\varphi_1(x))$. On calcule $\dot{V}(x):=L_{f_h}V(x_1,x_2,u)$, $\forall (x_1,x_2,u)\in\R^{n-1}\times\R^n\times\R$,
\begin{equation*}
\begin{split}
\dot{V}(x)\leq-\alpha(V_1(x_1))+(x_2-\varphi_1(x_1))\cdot[f_2(x_1,x_2)\\
\cdot u+h_2(x_1,x_2,u)-L_{f_1+h_1}\varphi_1(x_1,x_2,u)\\
+\partial_{x_1}V_1(x_1)\cdot\int_0^1f_1(x_1,sx_1-(1-s)\varphi_1(x_1))\,ds]\\
+L_{h_1}V_1(x_1,x_2,u).
\end{split}
\end{equation*}
Soit
\begin{equation*}
\begin{split}
E(x_1,x_2,u)=[f_2(x_1,x_2)u+h_2(x_1,x_2,u)\\
-L_{f_1+h_1}\varphi_1(x_1,x_2,u)\\
+\partial_{x_1}V_1(x_1)\cdot\int_0^1f_1(x_1,sx_1-(1-s)\varphi_1(x_1))\,ds].
\end{split}
\end{equation*}
Alors,
\begin{equation}\label{eq:backstepping classique}
\begin{split}
\dot{V}(x)\leq-\alpha(V_1(x_1))+(x_2-\varphi_1(x_1))\\
\cdot E(x_1,x_2,u)+L_{h_1}V_1(x_1,x_2&,u)
\end{split}
\end{equation}
Par conséquence, pour obtenir un terme $(x_2-\varphi_1(x_1))^2$ à droite de l'inégalité \eqref{eq:backstepping classique}, il est nécessaire de résoudre le système d'équations implicites en $u$ donné par
$$\begin{array}{rcl}
L_{h_1}V_1(x_1,x_2,u)&=&-k(x_2-\varphi_1(x_1))^2\\
E(x_1,x_2,u)&=&-k(x_2-\varphi_1(x_1))
\end{array}$$
avec $k>0$. Ce système d'équations implicites est en général dificile à résoudre. Cette obstruction nous incite à introduire des hypothèses sur les fonctions $h_1$ et $h_2$ dans l'objectif de résoudre le problème de stabilisation. 

\begin{assumption}[Bornes]\label{hyp:bornes}
Il existe une fonction $V_1:\R^{n-1}\to\Ras$ positive, définie, propre et de classe $\class^1$. Une fonction $\varphi_1:\R^{n-1}\to\R$ de classe $\class^1$. Une fonction $\alpha:\Ras\to\Ras$ localement Lipschitz de classe $\Kinf$. Une fonction continue $\Psi:\R^n\to\R$ et deux constantes positives $\varepsilon<1$ et $M$ telles que les propriétés ci-dessous sont valides
\begin{enumerate}[a)]
\item (Contrôleur stabilisant pour le sous-système en $x_1$) $\forall x_1\in\R^{n-1}$,
$$L_{f_1}V_1(x_1,\varphi_1(x_1))\leq-\alpha(V_1(x_1))$$

\item (Borne sur $h_1$) $\forall (x_1,x_2,u)\in\R^{n-1}\times\R\times\R$,
$$\begin{array}{rcl}
||h_1(x_1,x_2,u)||&\leq&\Psi(x_1,x_2),\\
L_{h_1}V_1(x_1,\varphi_1(x_1),u)&\leq&(1-\varepsilon)\alpha(V_1(x_1))+\varepsilon\alpha(M);
\end{array}$$

\item (Borne sur $\partial_{x_2}h_1$) $\forall (x_1,x_2,u)\in\R^{n-1}\times\R\times\R$,
$$\left|\left|\partial_{x_2} h_1(x_1,x_2,u)\right|\right|\leq\Psi(x_1,x_2);$$

\item (Borne sur $h_2$) $\forall (x_1,x_2,u)\in\R^{n-1}\times\R\times\R$,
$$||h_2(x_1,x_2,u)||\leq\Psi(x_1,x_2)$$
\end{enumerate}
\end{assumption}

La condition a) de l'hypothèse \ref{hyp:bornes} permet de concevoir une commande stabilisante globale pour l'équation définie par
\begin{equation}\label{eq:sous systeme general}
\dx_1=f_1(x_1,x_2),
\end{equation}
lorsque $x_2$ est vu comme une entrée. C'est-à-dire que cette condition permet de concevoir une commande pour $\dx=f(x,u)$ à travers la technique de \emph{backstepping}. Les conditions b)-d) fournissent des bornes pour les termes $h_1$ et $h_2$ ne permettant pas que cette technique soit appliquée pour le système \eqref{eq:systeme generale}. En exploitant les bornes sur les fonctions $h_1$ et $h_2$, nous arrivons à concevoir une classe de commandes qui garantit que l'ensemble compact donné par
\begin{equation}\label{eq:general set:A}
\begin{split}
\A=\{(x_1,x_2)\in\R^n:V_1(x_1)\leq M,x_2=\varphi_1&(x_1)\}.
\end{split}
\end{equation}
est globalement pratiquement asymptotiquement stable. Plus précisément, nous avons le résultat suivant présenté dans \citep[Proposition 3.1]{5991251}
\begin{proposition}\label{prop:commande globale}
Si l'hypothèse \ref{hyp:bornes} est satisfaite, alors l'ensemble $\A$ défini par \eqref{eq:general set:A} est  globalement pratiquement stable, c'est-à-dire,  pour chaque $a>0$, il existe une commande continue $\varphi_g$ telle que $\A+a\overline{\mathbf{B}}$ contient un ensemble qui est globalement asymptotiquement stable pour le système $\dx=f_h(x,\varphi_g(x))$.
\end{proposition}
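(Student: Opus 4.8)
The plan is to build the controller $\varphi_g$ by a modified backstepping construction, using the candidate Lyapunov function $V(x_1,x_2)=V_1(x_1)+\tfrac12(x_2-\varphi_1(x_1))^2$, and then to exploit Hypothesis \ref{hyp:bornes} to force $\dot V$ to be negative outside a neighbourhood of the set $\A$ whose size is governed by $a$. First I would recompute $L_{f_h}V$ along \eqref{eq:systeme generale} as in the displayed computation preceding Hypothesis \ref{hyp:bornes}, isolating the term $(x_2-\varphi_1(x_1))\cdot E(x_1,x_2,u)$ together with the ``bad'' Lie-derivative term $L_{h_1}V_1$. The key idea is that we no longer try to cancel $E$ exactly (which would require solving the implicit equations shown to be intractable); instead, since $f_2$ is nowhere zero, we set $u=\varphi_g(x_1,x_2)$ to be the natural feedback making the $x_2$-channel dominate, i.e. choosing $u$ so that $f_2(x_1,x_2)u$ produces a term $-k(x_2-\varphi_1(x_1))$ plus a correction absorbing the parts of $E$ that do not involve $h_1,h_2$, leaving only the $h$-dependent residuals to be bounded.

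Next I would estimate the residual terms. On the slice $x_2=\varphi_1(x_1)$, item b) gives the crucial bound $L_{h_1}V_1(x_1,\varphi_1(x_1),u)\le(1-\varepsilon)\alpha(V_1(x_1))+\varepsilon\alpha(M)$, so that combined with item a) we get $\dot V\big|_{x_2=\varphi_1(x_1)}\le-\varepsilon\alpha(V_1(x_1))+\varepsilon\alpha(M)$, which is strictly negative as soon as $V_1(x_1)>M$. For $x_2$ away from $\varphi_1(x_1)$, I would Taylor-expand $L_{h_1}V_1$ in the second argument using item c) (the bound on $\partial_{x_2}h_1$) to control the difference $L_{h_1}V_1(x_1,x_2,u)-L_{h_1}V_1(x_1,\varphi_1(x_1),u)$ by $|x_2-\varphi_1(x_1)|\,\Psi(x_1,x_2)\,\|\partial_{x_1}V_1\|$, and use items b) and d) to bound $\|h_1\|,\|h_2\|$ by $\Psi$. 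All these extra terms are multiplied by $(x_2-\varphi_1(x_1))$ or can be folded against the $-k(x_2-\varphi_1(x_1))^2$ term by Young's inequality; choosing the gain $k$ large enough (depending on $\Psi$ on the relevant compact sublevel set, hence on $a$) makes the off-slice contribution dominate. The upshot is an estimate of the form $\dot V\le -W(x)$ where $W>0$ outside a set contained in $\A+a\overline{\mathbf B}$.

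Finally I would invoke a standard Lyapunov argument for practical asymptotic stability: the sublevel sets of $V$ are compact (since $V_1$ is proper and the quadratic term is coercive in $x_2-\varphi_1(x_1)$), $\dot V<0$ outside a small sublevel-type neighbourhood $\mathcal N_a$ of $\A$ with $\mathcal N_a\subset\A+a\overline{\mathbf B}$, so every trajectory enters $\mathcal N_a$ in finite time and the largest invariant set therein is globally asymptotically stable and contained in $\A+a\overline{\mathbf B}$, which is exactly the claim. The main obstacle I anticipate is the off-slice estimate: the bounds in Hypothesis \ref{hyp:bornes} are stated globally in $x_2$ but only on the slice $x_2=\varphi_1(x_1)$ for the sharp inequality in b), so one must carefully track how the gain $k$ must be adapted as a function of $a$ (equivalently, of the compact sublevel set of $V$ one works on), and verify that the resulting $\varphi_g$ is genuinely continuous and that no algebraic loop is created by the appearance of $u$ inside $h_1,h_2$ — here the hypotheses bound $h_1,h_2$ uniformly in $u$, which is precisely what breaks the loop, but this needs to be stated explicitly.
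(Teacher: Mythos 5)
Your proposal follows essentially the same route as the paper's: the proof is actually deferred to \citep[Proposition 3.1]{5991251}, but the displayed formula \eqref{eq:commande globale} for $\varphi_g$ reveals exactly your construction — a backstepping Lyapunov function of the form $V_1(x_1)+\tfrac{K_V}{2}(x_2-\varphi_1(x_1))^2$, exact cancellation of the $h$-free part of $E$ (the $L_{f_1}\varphi_1$ and $\int_0^1\partial_{x_2}f_1$ terms), the slice bound of item b) to obtain $-\varepsilon\alpha(V_1)+\varepsilon\alpha(M)$, and Young's inequality against the $\Psi$-bounds of items b)--d) to absorb the residuals, yielding practical stability of $\A$. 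The one point you flag yourself is indeed where you must be careful: since $\Psi$ need not be globally bounded, a constant gain $k$ chosen on a compact sublevel set does not give the global claim directly; the paper instead uses the state-dependent nonlinear damping $c+\tfrac{c}{4}\Delta^2(x_1,x_2)$, with $\Delta$ collecting precisely your residual bounds, and places the dependence on $a$ in the weight $K_V=2(M+a)/a^2$ of the quadratic term rather than in the gain.
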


La démonstration de la Proposition \ref{prop:commande globale} fournit une commande $\varphi_g$ dont la définition est:
\begin{equation}\label{eq:commande globale}
\begin{split}
\varphi_g(x_1,x_2)=\frac{1}{K_V f_2(x_1,x_2)}\cdot[K_V\\
L_{f_1}\varphi_1(x_1,x_2)-\partial_{x_1}V_1(x_1)\\
\cdot\int_0^1\partial_{x_2}f_1(x_1,\eta_{x_1,x_2}(s))\,ds\\
-(x_2-\varphi_1(x_1))\cdot(c+\frac{c}{4}\Delta^2(x_1,x_2))],
\end{split}
\end{equation}
avec $K_V=2(M+a)/a^2$, $\eta_{x_1,x_2}(s)=sx_2+(1-s)\varphi_1(x_1)$, $c$ une constante positive et
\begin{equation*}
\begin{split}
\Delta(x_1,x_2)=||\partial_{x_1}V_1(x_1)||\int_0^1\Psi(x_1,\eta_{x_1,x_2}(s))\,ds\\
+\Psi(x_1,x_2)K_V(1+||\partial_{x_1}\varphi_1(x_1)||)
\end{split}
\end{equation*}

\begin{comment}
\begin{remark}
De l'hypothèse \ref{hyp:stabilite locale}, on travaille avec une approximation locale pour obtenir une commande stabilisante locale. De l'hypothèse \ref{hyp:bornes} a), on obtient une commande globalement stabilisante pour le sous-système \eqref{eq:sous systeme general}.
\end{remark}
\end{comment}

Afin d'obtenir une loi de commande qui stabilise globalement et asymptotiquement l'origine, il faut donc rassembler la commande locale $\varphi_\ell$ donnée par l'hypothèse \ref{hyp:stabilite locale} et la commande obtenue à la Proposition \ref{prop:commande globale} en exploitant un formalisme hybride. Pour cela nous utilisons l'hypothèse suivante.

\begin{assumption}[Inclusion]\label{hyp:inclusion}
La fonction $V_\ell$ satisfait l'inégalité suivante
$$\displaystyle\max_{(x_1,x_2)\in\A}V_\ell(x_1,x_2)<c_\ell.$$
\end{assumption}
Les hypothèses \ref{hyp:stabilite locale} et \ref{hyp:inclusion} imposent que l'ensemble $\A$ soit  inclus dans le bassin d'attraction de \eqref{eq:systeme generale} en boucle fermée avec $\varphi_\ell$.

Dans \citep{5991251}, nous avons présenté le problème de conception d'une commande pour \eqref{eq:systeme generale} de telle sorte que l'origine soit globalement asymptotiquement stable pour le système \eqref{eq:systeme generale} en boucle fermée. Nous présentons ici la définition de ce type de commande (pour une lecture plus détaillée sur les systèmes hybrides, le lecteur est invité à consulter \citep{4806347} et \citep{4380508})
 \begin{definition}
 Une commande hybride pour \eqref{eq:systeme generale}, notée par $\mathbb{K}$, est composée de
 \begin{itemize}
 \item Un ensemble totalement ordonné $Q$;

 \item Pour chaque $q\in Q$
 \begin{itemize}
 \item des ensembles fermés $C_q\subset\R^n$ et $D_q\subset\R^n$ tels que $C_q\cup D_q=\R^n$;

 \item des fonctions continues $\varphi_q:C_q\to\R$;

 \item des fonctions extérieurement semi-continues, localement bornées, uniformément dans $Q$,  évaluées en ensembles $G_q:D_q\rightrightarrows Q$ avec des images non vides.
 \end{itemize}
 \end{itemize}
 \end{definition}
Avec ces données, nous pouvons définir une commande hybride telle que l'origine du système bouclé est globalement asymptotiquement stable \citep[theorème 1]{5991251}:

\begin{theorem}\label{thm:commande hybride}
Soient $c_\ell$ et $\tilde{c}_\ell$ deux constantes positives qui satisfont $0<\tilde{c}_\ell<c_\ell$.
Si les hypothèses \ref{hyp:stabilite locale}-\ref{hyp:inclusion} sont toutes valides et s'il existe une constante $a>0$ telle que la commande hybride $\mathbb{K}$ définie par $Q=\{1,2\}$, les sous-ensembles
$$\begin{array}{rcl}
C_1&=&\{(x_1,x_2)\in\R^{n-1}\times\R:V_\ell(x_1,x_2)\leq c_\ell\}\\
C_2&=&\{(x_1,x_2)\in\R^{n-1}\times\R:V_\ell(x_1,x_2)\geq \tilde{c}_\ell\}\\
D_q&=&\overline{(\R^{n-1}\times\R)\setminus C_q},\forall q\in Q,
\end{array}$$
les commandes
$$\begin{array}{rrcl}
\varphi_1:&C_1&\to&\R^{n-1}\times\R\\
&(x_1,x_2)&\mapsto&\varphi_1(x_1,x_2)=\varphi_\ell(x_1,x_2),\\
\\
\varphi_2:&C_2&\to&\R^{n-1}\times\R\\
&(x_1,x_2)&\mapsto&\varphi_2(x_1,x_2)=\varphi_g(x_1,x_2)
\end{array}
$$
et les applications multi-valuées et définies par $D_q\ni (x_1,x_2)\mapsto G_q(x_1,x_2)=\{3-q\}$, $q\in Q$ rend l'origine globalement asymptotiquement stable pour le système \eqref{eq:systeme generale} en boucle fermée avec $\mathbb{K}$:
$$\left\{\begin{array}{rclrcl}
\dx&=&f_h(x,\varphi_q(x)),&x&\in&C_q\\
q^+&\in&G_q(x),&x&\in&D_q
\end{array}\right.$$
\end{theorem}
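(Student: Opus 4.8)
The plan is to make the two feedbacks cooperate through a capture mechanism. Under the global controller $\varphi_g$ every trajectory is driven into a small neighbourhood of $\A$, which I will arrange to lie strictly inside the jump set of mode $q=2$; the loop is then forced to switch to the local controller $\varphi_\ell$, under which $V_\ell$ is a strict Lyapunov function on a sublevel set containing that neighbourhood, so the trajectory is absorbed to the origin and switches no more. First I would fix the free parameter $a$: by Assumption \ref{hyp:inclusion}, $\max_{(x_1,x_2)\in\A}V_\ell(x_1,x_2)<c_\ell$, so, taking if needed $\tilde c_\ell$ strictly between this maximum and $c_\ell$, continuity of $V_\ell$ and compactness of $\A$ furnish $a>0$ with $\A+a\overline{\mathbf{B}}\subset\{x:V_\ell(x)<\tilde c_\ell\}$. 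For this $a$, Proposition \ref{prop:commande globale} provides $\varphi_g$ and a compact set $\Lambda\subset\A+a\overline{\mathbf{B}}$ that is globally asymptotically stable for $\dx=f_h(x,\varphi_g(x))$; in particular that flow has no finite escape time.

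Next I would note that $\mathbb{K}$ satisfies the hybrid basic conditions: $C_1=\{V_\ell\le c_\ell\}$, $D_1=\{V_\ell\ge c_\ell\}$, $C_2=\{V_\ell\ge\tilde c_\ell\}$, $D_2=\{V_\ell\le\tilde c_\ell\}$ are closed with $C_q\cup D_q=\R^n$, the flow maps $x\mapsto f_h(x,\varphi_q(x))$ are continuous and the constant jump maps are outer semicontinuous; hence solutions exist from every initial condition and the closed loop is robust. The heart of the argument consists of two structural facts. (i) \emph{Mode $1$ is absorbing}: if a solution is ever in mode $1$ with $V_\ell(x)\le\tilde c_\ell$, then $x\in C_1\setminus D_1$, so it can only flow; since $\{0<V_\ell\le\tilde c_\ell\}\subset\Omega_{c_\ell}(V_\ell)$, Assumption \ref{hyp:stabilite locale} gives $\dot V_\ell<0$ along $\dx=f_h(x,\varphi_\ell(x))$, so $V_\ell$ stays below $c_\ell$ and never re-enters $D_1$: the solution flows for all forward time, is complete, and the standard Lyapunov argument yields $x(t,\cdot)\to0$. (ii) \emph{Mode $2$ hands over}: a solution flowing in mode $2$ obeys $\dx=f_h(x,\varphi_g(x))$ and is bounded with $\mathrm{dist}(x,\Lambda)\to0$; since $\Lambda$ lies strictly below the level $\tilde c_\ell$, it cannot flow in $C_2$ for all time, so it jumps to mode $1$ at a state with $V_\ell(x)\le\tilde c_\ell$ — precisely the situation of (i).

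Global attractivity then follows by a short case analysis on the initial data $(x(0,0),q(0,0))$: from mode $2$ the solution either jumps at once to mode $1$ or flows and hands over by (ii), and then (i) applies; from mode $1$ with $V_\ell(x(0,0))>c_\ell$ the solution jumps immediately to mode $2$, reducing to the previous case; from mode $1$ with $V_\ell(x(0,0))\le c_\ell$ the solution flows and, by Assumption \ref{hyp:stabilite locale}, $V_\ell$ either decreases into $\{V_\ell<\tilde c_\ell\}$ (absorbed by (i)) or forces a jump to mode $2$. In every case there are only finitely many jumps and the solution is ultimately governed by $\varphi_\ell$ with $V_\ell\to0$, so $x(t,j)\to0$ and all maximal solutions are complete. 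For Lyapunov stability, given $\varepsilon>0$ I would choose $\rho\in(0,\tilde c_\ell]$ with $\{V_\ell\le\rho\}\subset\varepsilon\overline{\mathbf{B}}$ and then $\delta>0$ with $\{\|x\|\le\delta\}\subset\{V_\ell<\rho\}$; a solution with $\|x(0,0)\|\le\delta$ has $V_\ell(x(0,0))<\rho\le\tilde c_\ell$, so if it starts in mode $2$ it cannot flow and jumps immediately to mode $1$ without moving $x$, after which it flows forever with $V_\ell$ nonincreasing and remains in $\{V_\ell<\rho\}\subset\varepsilon\overline{\mathbf{B}}$. Together, stability and global attractivity give the claim.

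The main obstacle is not any single estimate but the hybrid bookkeeping: making sure the choice of $a$ really places $\Lambda$ in the interior of $D_2$, so that the handover from $\varphi_g$ to $\varphi_\ell$ is forced and not merely allowed, and checking that no maximal solution can avoid capture by mode $1$. The latter includes excluding Zeno solutions, which is immediate here since a switch $1\to2$ needs $V_\ell\ge c_\ell$ while a switch $2\to1$ needs $V_\ell\le\tilde c_\ell$, so consecutive jumps are separated by a flow across the nonempty band $\tilde c_\ell<V_\ell<c_\ell$; consequently each maximal solution undergoes at most two jumps before settling in mode $1$ for good.
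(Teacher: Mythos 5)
The paper itself gives no proof of this theorem --- it is quoted from \citep[theorème 1]{5991251} --- and your argument is a correct reconstruction of the standard hysteresis-switching proof used there: drive the state into $\A+a\overline{\mathbf{B}}\subset\{V_\ell<\tilde c_\ell\}$ with $\varphi_g$, force the jump to mode $1$, and then let $V_\ell$ decrease under $\varphi_\ell$; the two thresholds $\tilde c_\ell<c_\ell$ exclude Zeno behaviour exactly as you say. The one point worth flagging is that your handover step (ii) genuinely needs $\tilde c_\ell>\max_{(x_1,x_2)\in\A}V_\ell(x_1,x_2)$ (otherwise the attractor $\Lambda$ furnished by Proposition \ref{prop:commande globale} may meet $C_2$ and a mode-$2$ solution need never enter $D_2$), a constraint the theorem's phrase ``$0<\tilde c_\ell<c_\ell$'' does not make explicit but which Assumption \ref{hyp:inclusion} makes satisfiable; your decision to adjust $\tilde c_\ell$ accordingly is the right reading of the statement rather than a gap in your proof.
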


\begin{remark}
Si les items b)-d) de l'hypothèse \ref{hyp:bornes} sont valides, l'équation \eqref{eq:backstepping classique}
témoigne de l'effet des fonctions $h_1$ et $h_2$ sur la procédure de backstepping classique. Intuitivement, plus les bornes des fonctions $h_1$ et $h_2$ sont grandes plus le bassin d'attraction de la commande locale est petit. Ainsi, il existe un compromis entre la taille des bornes des fonction $h_i$, $i=1,2$ et la validité de l'hypothèse \ref{hyp:inclusion}.

Une condition nécessaire pour que la commande hybride précédemment définie rende l'origine globalement asymptotiquement stable pour le système bouclé est donnée par
$$\A\subseteq \mathbf{B}_{f_h}(\varphi_\ell).$$
Supposons que les hypothèses \ref{hyp:stabilite locale} et \ref{hyp:bornes} sont valides. Si $\A\not\subseteq \mathbf{B}_{f_h}(\varphi_\ell)$, alors l'hypothèse \ref{hyp:inclusion} n'est pas satisfaite. Ainsi, il peut exister des conditions initiales telles que la solution du système $\dx=f_h(x,\varphi_g(x))$ converge vers l'ensemble $\A\setminus \mathbf{B}_{f_h}(\varphi_\ell)$.
Ainsi les trajectoires du système $\dx=f_h(x,\varphi_\ell(x))$ ne convergera pas vers l'origine pour toutes les conditions initiales dans l'ensemble $\A\setminus \mathbf{B}_{f_h}(\varphi_\ell)$.
\end{remark}

Bien que le théorème \ref{thm:commande hybride} donne une commande globalement stabilisante, lorsque les hypothèses \ref{hyp:stabilite locale}-\ref{hyp:inclusion} sont valides,  il n'est pas toujours évident d'effectuer la synthèse d'une commande locale qui satisfasse la contrainte sur le bassin d'attraction. Par ailleurs, la taille de l'ensemble $\A$ dépend du gain de la commande stablisante pour le sous-système en $x_1$. Dans ce travail, nous présentons des conditions sur les gains telles que l'hypothèse \ref{hyp:inclusion} soit satisfaite, quand la commande locale a été déjà donnée par l'hypothèse \ref{hyp:stabilite locale}.

\subsection{Définition du problème}

Notre approche se décompose en deux étapes:
\begin{itemize}
\item Dans un premier temps, nous devons chercher un couple stabilisant, c'est-à-dire, une commande locale $\varphi_\ell$ et une fonction de Lyapunov locale $V_\ell$ avec une estimation du bassin d'attraction pour le système $\dx=f_h(x,\varphi_\ell(x))$. 

\item Dans un second temps, nous devons établir des conditions sur le gain de la commande $\varphi_1$ pour le sous-système défini par \eqref{eq:sous systeme general}, telles que la taille de l'ensemble défini par \eqref{eq:general set:A} soit la plus petite possible afin que l'hypothèse \ref{hyp:inclusion} soit plus facile à satisfaire.
\end{itemize}

Dans cet article, nous n'allons pas analyser un problème général mais nous allons poursuivre l'étude d'une classe particulière de systèmes non-linéaires introduite dans \citep{5991251}.

\section{Une classe de systèmes non-linéaires}\label{sec:classe de problemes}

On rappelle la classe de systèmes non-linéaires présentés dans \citep[Équation (12)]{5991251}:
\begin{equation}\label{eq:systeme de l'exemple}
\left\{\begin{array}{rcl}
\dx_2&=&u\\
\dx_1&=&x_1+x_2+\theta x_1^2+\theta(1+x_1)\sin(u)\\
\end{array}\right.
\end{equation}
avec $\theta>0$ qui est un paramètre libre mais constant pour chaque système.

L'intérêt dans cette classe de systèmes nous permet d'illustrer l'influence de la norme des fonctions $h_i$, $i=1,2$, dans les calculs numériques et dans la faisabilité des hypothèses \ref{hyp:stabilite locale}-\ref{hyp:inclusion}.

Dans  \citep[Lemma 4.1]{5991251} nous avons vérifié que les hypothèses \ref{hyp:stabilite locale}-\ref{hyp:inclusion} sont valides pour tous les systèmes avec $\theta\leq0.001$.
Dans notre étude, nous montrons qu'avec un choix approprié de la commande $\varphi_1$ les mêmes approches sont valides pour une classe plus grande de systèmes (plus précisément pour des valeurs plus grands de $\theta$).

L'idée de notre approche est de considérer un gain $K_1$ qui garantit que l'orientation du plus grand axe de l'ellipsoïde associé aux lignes de niveau de la fonction de Lyapunov soit celle de l'ensemble $\A$.
%
%égal à l’inclination du vecteur directeur dans la direction du grand axe de l’ellipsoïde définie par la matrice $P_\ell$ par rapport aux cordonnées $(x_1,x_2)$ plus un terme en raison de la commande $\varphi_1$, pour les systèmes tels que $\theta\leq0.06$, nous avons le théorème suivant.

En suivant cette démarche, nous obtenons alors le résultat suivant.

\begin{theorem}\label{thm:result} Pour la classe de systèmes définie par $\theta\leq0.06$, il existe une commande hybride telle que l'origine est globalement asymptotiquement stable pour chaque système en boucle fermée.
\end{theorem}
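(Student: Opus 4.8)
The plan is to verify Hypotheses \ref{hyp:stabilite locale}--\ref{hyp:inclusion} for the system \eqref{eq:systeme de l'exemple} for every $\theta \le 0.06$, and then invoke Theorem \ref{thm:commande hybride} directly. The improvement over the bound $\theta \le 0.001$ from \citep[Lemma 4.1]{5991251} will come entirely from a better choice of the stabilising controller $\varphi_1$ for the $x_1$-subsystem $\dx_1 = x_1 + x_2$ (here $x_2$ plays the rôle of the virtual input), exploiting the freedom of its gain $K_1$.

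First I would fix the local data. For \eqref{eq:systeme de l'exemple} the origin's first-order approximation is the controllable pair $\dx_1 = x_1 + x_2$, $\dx_2 = u$, so by LQR (or pole placement) one gets a linear feedback $\varphi_\ell$ and a quadratic $V_\ell(x) = x^\top P x$ with $L_{f_h}V_\ell(x,\varphi_\ell) < 0$ on a sublevel set $\Omega_{c_\ell}(V_\ell)$; this settles Hypothesis \ref{hyp:stabilite locale}, with $c_\ell$ estimated by the standard argument of bounding the residual nonlinear terms $\theta x_1^2 + \theta(1+x_1)\sin u$ on that sublevel set. Second, for Hypothesis \ref{hyp:bornes} I take $\varphi_1(x_1) = -(1+K_1)x_1$ so that $\dx_1 = -K_1 x_1$, $V_1(x_1) = x_1^2/2$, and $\alpha(s) = 2K_1 s$; item a) holds. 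The terms $h_1 = \theta x_1^2 + \theta(1+x_1)\sin u$ and $h_2 \equiv 0$ must be bounded: one checks $|h_1| \le \theta x_1^2 + \theta(1+|x_1|) =: \Psi(x_1,x_2)$ (independent of $x_2$, which also handles item c) since $\partial_{x_2} h_1 = 0$), and item d) is trivial. The delicate inequality is the second line of item b): $L_{h_1}V_1(x_1,\varphi_1(x_1),u) = x_1 h_1(x_1,\varphi_1(x_1),u) \le (1-\varepsilon)\alpha(V_1(x_1)) + \varepsilon\alpha(M)$; since $\alpha$ is linear this reads $\theta x_1(x_1^2 + (1+x_1)\sin u) \le (1-\varepsilon)K_1 x_1^2 + \varepsilon K_1 M$, and after maximising the left side over $u$ and completing squares this determines an admissible pair $(\varepsilon, M)$ as a function of $\theta$ and $K_1$ — crucially $M$ can be made moderate when $K_1$ is chosen well.

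Third, and this is the crux, I must arrange Hypothesis \ref{hyp:inclusion}: $\max_{(x_1,x_2)\in\A} V_\ell(x_1,x_2) < c_\ell$, where $\A = \{(x_1,x_2) : x_1^2/2 \le M,\ x_2 = \varphi_1(x_1) = -(1+K_1)x_1\}$. So $\A$ is a line segment through the origin with slope $-(1+K_1)$, of half-length $\sqrt{2M}$ in $x_1$. The key geometric idea, as announced in the text, is to pick $K_1$ so that this segment is aligned with the \emph{longest} axis of the ellipse $\{x^\top P x = c_\ell\}$: then $V_\ell$ restricted to $\A$ grows as slowly as possible, and the constraint $\max_\A V_\ell < c_\ell$ tolerates the largest $M$, hence (via step two) the largest $\theta$. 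Concretely I would: (i) compute the eigenvector of $P$ with smallest eigenvalue and set $K_1$ so that $(1, -(1+K_1))$ points along it; (ii) with that $K_1$, read off from step two the value $M(\theta)$; (iii) evaluate $\max_\A V_\ell = \lambda_{\min}(P)\cdot (2M)(1 + (1+K_1)^2)$ approximately and check it stays below the $c_\ell(\theta)$ from step one for all $\theta \le 0.06$. This last verification is a finite, monotone-in-$\theta$ numerical check, so it suffices to test it at $\theta = 0.06$.

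The main obstacle is the coupling between the three steps: enlarging $\theta$ inflates $\Psi$, which in step two forces $M$ up (for fixed $\varepsilon, K_1$) and in step one pushes $c_\ell$ down, squeezing Hypothesis \ref{hyp:inclusion} from both sides; simultaneously, choosing $K_1$ large to shrink $M$ makes the segment $\A$ steeper and the alignment with the minor axis of $P$'s ellipse harder to maintain unless $P$ itself is re-chosen. I expect the proof to require treating $P$ (equivalently the LQR weights) and $K_1$ as joint design parameters, optimising the slack in Hypothesis \ref{hyp:inclusion} over them, and then certifying that at $\theta = 0.06$ a strictly positive slack remains — with the bound $0.06$ presumably being the largest round value for which the numerics close. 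Once all three hypotheses are in hand, Theorem \ref{thm:commande hybride} yields the hybrid controller and global asymptotic stability of the origin, which is exactly the claim.
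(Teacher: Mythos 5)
Your overall plan coincides with the paper's: verify Hypothèses \ref{hyp:stabilite locale}--\ref{hyp:inclusion} for \eqref{eq:systeme de l'exemple}, gain over the earlier bound $\theta\leq0.001$ by tuning the gain of $\varphi_1$ so that the set $\A$ is aligned with the longest axis of the level ellipses of $V_\ell$, and then invoke the théorème \ref{thm:commande hybride}. That alignment idea is precisely the one the paper announces and implements (with the explicit choices $\varphi_\ell=-k_1x_1+k_2x_2$, $V_\ell=w^TP_\ell w$, $\varphi_1(x_1)=-2.7456\,x_1-\theta x_1^2$, $V_1=x_1^2/2$, $\alpha(s)=3.4912\,s$, $\Psi=\theta(1+|x_1|)$, $\varepsilon=0.89$, $M=0.02$, $c=a=10$, and a numerical verification of l'hypothèse \ref{hyp:inclusion} valid for all $\theta<0.0607418$).

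There is, however, a concrete gap in your second step. You decompose the $x_1$-equation as $f_1=x_1+x_2$ and $h_1=\theta x_1^2+\theta(1+x_1)\sin u$, and take the \emph{linear} feedback $\varphi_1(x_1)=-(1+K_1)x_1$. With $V_1=x_1^2/2$, the second inequality of l'hypothèse \ref{hyp:bornes}~b) must hold for \emph{all} $x_1\in\R$ and all $u$, and its left-hand side $x_1h_1(x_1,\varphi_1(x_1),u)$ then contains the cubic term $\theta x_1^3$, while the right-hand side $(1-\varepsilon)\alpha(V_1(x_1))+\varepsilon\alpha(M)$ is at most quadratic: indeed item a) forces $\alpha(V_1(x_1))\leq K_1x_1^2$ for a linear $\varphi_1$, so no choice of $(\varepsilon,M,\alpha)$ can close the inequality as $x_1\to+\infty$, and Proposition \ref{prop:commande globale} is not available. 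The paper avoids this by keeping $\theta x_1^2$ inside $f_1$ and cancelling it with the nonlinear part of the feedback, $\varphi_1(x_1)=-2.7456\,x_1-\theta x_1^2$, so that only $h_1=\theta(1+x_1)\sin u$ remains, bounded by $\Psi=\theta(1+|x_1|)$; then $x_1h_1\leq\theta|x_1|+\theta x_1^2$ is quadratic and the inequality closes (at $\theta=0.06$, $\varepsilon=0.89$, $M=0.02$ it reduces to $0.132\,x_1^2-0.06|x_1|+0.062\geq0$, whose discriminant is negative). Apart from this mis-assignment of the $\theta x_1^2$ term --- which also changes $\A$ from your line segment into a parabolic arc, without affecting your alignment argument for small $M$ --- the remaining verifications are, as you anticipate, numerical checks culminating in $\max_{\A}V_\ell=0.030807<0.0390824=c_\ell$ at $\theta=0.06$.
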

\begin{proof}

Pour ne pas alourdir la lecture, les calculs intermédiaires seront omis, et nous ne présenterons que les éléments principaux de la démonstration. L'hypothèse \ref{hyp:stabilite locale} est vérifiée, avec la commande
\begin{equation}\label{eq:phil}
\varphi_\ell(x_1,x_2)=-x_1k_1+x_2k_2
\end{equation}
avec $k_1=7+\theta $ et $k_2=-4+4 \theta +\theta  (1+\theta )$. La fonction de Lyapunov pour le système \eqref{eq:systeme de l'exemple} en boucle fermée  avec $\varphi_\ell$ est définie par
\begin{equation}\label{eq:Vl}
\begin{split}
V_\ell(x_1,x_2)=w^TP_\ell w=w^T\begin{bmatrix}5/2&1\\ \\
1&1/2
\end{bmatrix}w
\end{split}
\end{equation}
$w=[x_1-\theta x_2, x_2]^T$. La constante $c_\ell$ vaut
\begin{equation}\label{eq:borne vl}
c_\ell=\left(\dfrac{2-\theta p_1(\theta)}{\theta p_2(\theta)}\right)^2,\quad\forall\theta<\theta_1.\footnote{Les polynômes $p_1$ et $p_2$ sont donnés par
\begin{equation*}
\begin{array}{rcl}
	p_1(\theta)&=&-396+2308 \theta +9768 \theta ^2+1440 \theta ^3\\
	p_2(\theta)&=&792+7000 \theta +20856 \theta ^2+21672 \theta ^3+2160 \theta ^4.
\end{array}
\end{equation*} et $\theta_1=0,118462$ est la plus petite racine positive de l'équation $\xi(\theta)=-2+\theta p_1(\theta)$.}
\end{equation}

L'hypothèse \ref{hyp:bornes} est vérifiée avec $\varphi_1(x_1)=-2.7456x_1-\theta x_1^2$, $V_1(x_1)=x_1^2/2$, $\alpha(s)=3.4912s$, $\Psi(x_1,x_2)=\theta(1+|x_1|)$, $\varepsilon=0.89$ et $M=0.02$. Par ailleurs, la proposition \ref{prop:commande globale} assure l'existence de la commande donnée par \eqref{eq:commande globale}, avec $c=a=10$,
\begin{equation}\label{eq:commande globale:exemple}
\begin{split}
\varphi_g(x_1,x_2)= -(2.7456+2\theta x_1)(x_1+x_2+\theta x_1^2)\\
-\frac{x_1}{2K_V}-\frac{x_2-\varphi_1(x_1)}{K_V}(c+\frac{c}{4}\Delta^2(x_1,x_2)),
\end{split}
\end{equation}
et
\begin{equation*}
\begin{split}
\Delta(x_1,x_2)=\theta K_V(1+|x_1|)(|x_1|/K_V+1\\
+|2.7456+&2\theta x_1|)
\end{split}
\end{equation*}
telle que l'ensemble $\A=\{(x_1,x_2):|x_1|\leq0.2,x_2=\varphi_1(x_1)\}$ est globalement pratiquement stable pour le système \eqref{eq:systeme de l'exemple} en boucle fermée avec $\varphi_g$.

Pour l'hypothèse \ref{hyp:inclusion}, on trouve qu'elle est vérifiée, $\forall\theta< 0.0607418$. En particulier, avec $\theta=0.06$ on a
\begin{equation*}
\begin{split}
\max_{(x_1,x_2)\in\A}V_\ell(x_1,x_2)=V_\ell(0.2,\varphi_1(0.2))\\
=0.030807<0.0390824=c_\ell
\end{split}
\end{equation*}

Avec le théorème \ref{thm:commande hybride}, nous obtenons donc une loi de commande hybride nous permettant de conclure la preuve du théorème \ref{thm:result}.
\end{proof}

\section{Simulations}\label{sec:simulations}

Dans cette section, nous présentons des simulations du système précédant.
Nous considérons comme condition initiale des points au bord d'une boule de rayon 2 et nous exécutons une simulation du système \eqref{eq:systeme de l'exemple} avec $\theta=0.06$ et bouclé avec $\mathbb{K}$.

Comme attendu, le système converge vers l'origine.
Pour avoir une illustration claire nous montrons l'évolution temporelle de $V_\ell$  (figure \ref{sim:theta 006:Vl}) et des composantes de la solution $(x,q)$ du système bouclé avec $\mathbb{K}$ seulement pour la condition initiale $(x_1,x_2,q)=(2,0,1)$ (figure \ref{sim:theta 006:x1x2q}). Pour les autres points qui appartiennent à la boule le comportement est similaire, comme le montre la figure \ref{sim:theta 006:traj} qui est la trajectoire de $5$ points dans l'espace d'états.

\begin{figure}[htbp!]
\begin{center}
\includegraphics[width=\textwidth]{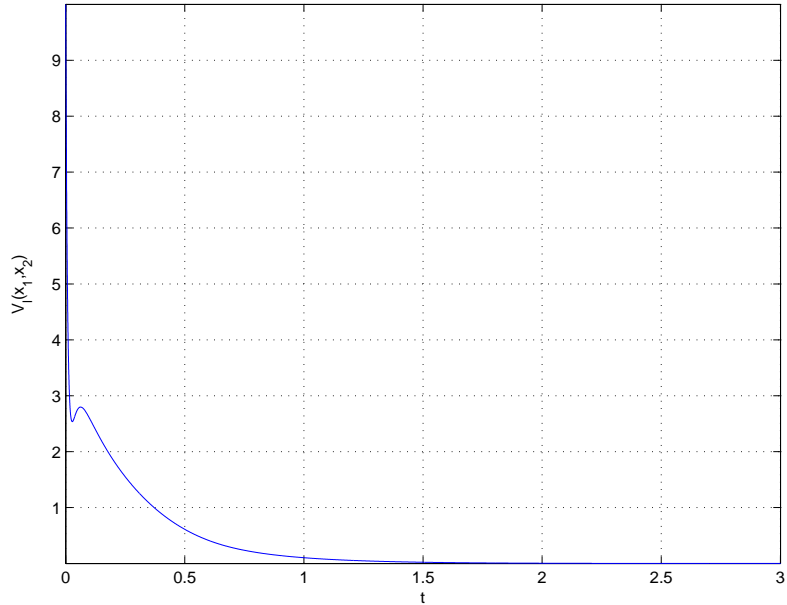}
\caption{Evolution de \eqref{eq:Vl} pour le système \eqref{eq:systeme de l'exemple} avec $\theta=0.06$, en boucle fermée avec $\mathbb{K}$.}
\label{sim:theta 006:Vl}
\end{center}
\end{figure}

\begin{figure}[htbp!]
\begin{center}
\includegraphics[width=\textwidth]{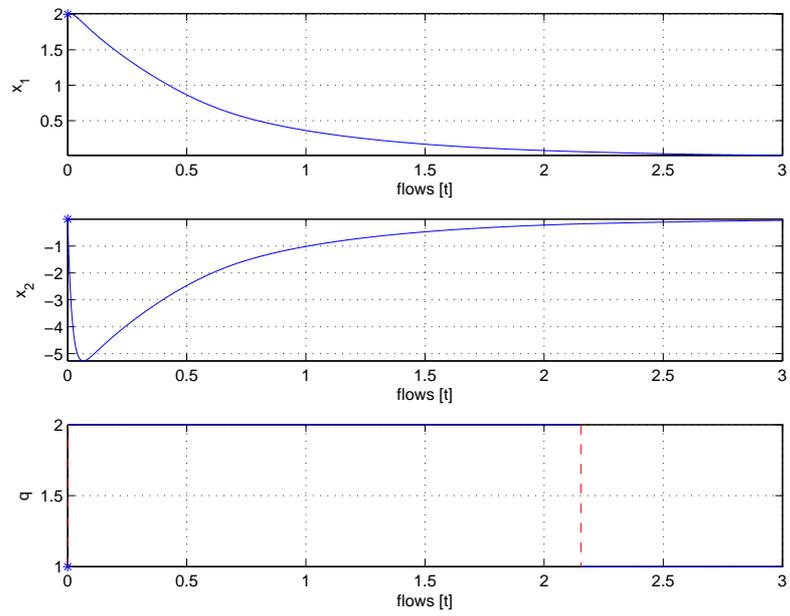}
\caption{Evolution des composantes de la solution du système \eqref{eq:systeme de l'exemple} avec $\theta=0.06$, en boucle fermée avec $\mathbb{K}$.}
\label{sim:theta 006:x1x2q}
\end{center}
\end{figure}

\begin{figure}[htbp!]
\begin{center}
\includegraphics[width=\textwidth]{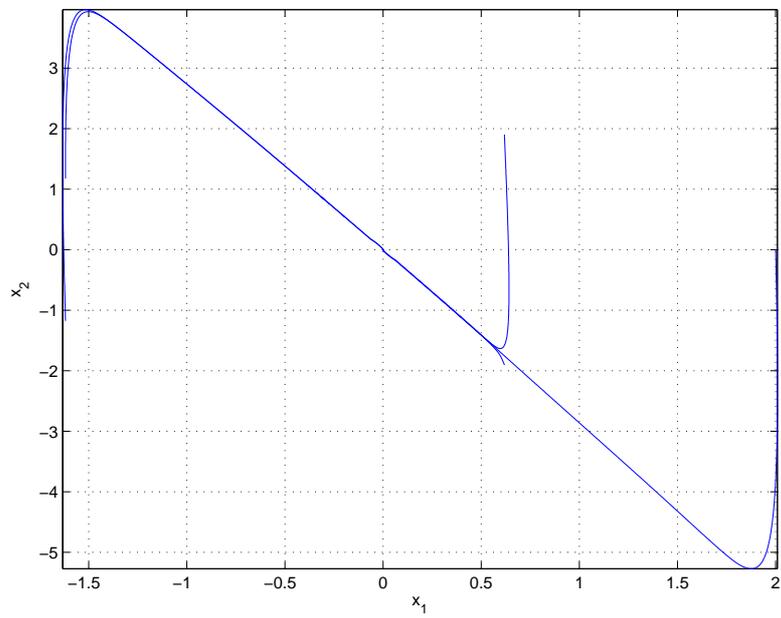}
\caption{Trajectoires des solutions pour différentes conditions initiales choisies au bord d'une boule de rayon 2.}
\label{sim:theta 006:traj}
\end{center}
\end{figure}

\section{Conclusion et perspectives}

Dans ce document, nous montrons que par un choix approprié de la loi de commande locale $\varphi_1$ pour le sous-système $x_1$, cela permet de vérifier l'hypothèse \ref{hyp:inclusion} pour une classe plus grande de systèmes que ce que nous avions obtenu lors de nos travaux précédents dans \citep{5991251}.
A présent, l'objectif est donc de trouver une démarche constructive de sélection de ce contrôleur local
afin de garantir que le bassin d'attraction de celui-ci contiendra l'ensemble $\A$ et par conséquent de vérifier l'hypothèse \ref{hyp:inclusion}.
\bibliographystyle{IEEEtranN}
\bibliography{bibliographie}
\end{document}